\documentclass[12pt]{article}
\usepackage[utf8]{inputenc}
\usepackage[T1]{fontenc}
\usepackage{amsmath,amssymb,amsthm}
\usepackage{geometry}
\usepackage{hyperref}
\usepackage{indentfirst}
\usepackage{booktabs}
\usepackage{xcolor}
\usepackage{fvextra}
\geometry{margin=1in}

\DefineVerbatimEnvironment{code}{Verbatim}{
  breaklines=true,
  breakanywhere=false,
  fontsize=\footnotesize,
  xleftmargin=1em,
}
\DefineVerbatimEnvironment{term}{Verbatim}{
  breaklines=true,
  breakanywhere=true,
  fontsize=\footnotesize,
  xleftmargin=1em,
}

\newtheorem{theorem}{Theorem}
\newtheorem{lemma}{Lemma}
\newtheorem{proposition}{Proposition}
\newtheorem{corollary}{Corollary}
\theoremstyle{remark}
\newtheorem{remark}{Remark}
\theoremstyle{definition}
\newtheorem{definition}{Definition}

\title{Non-Existence of Quintic Factorization for the Second Cuboid Polynomial $Q_{p,q}(t)$}

\author{Valery Asiryan\\[3pt]
\small \texttt{asiryanvalery@gmail.com}}
\date{\small January 3, 2026}

\begin{document}

\maketitle

\begin{abstract}
We consider the even monic degree-$10$ \emph{second cuboid polynomial} $Q_{p,q}(t)\in\mathbb{Z}[t]$ depending on coprime integers $p\neq q>0$.
We exclude the existence of a splitting of type $5+5$ over $\mathbb{Q}$, i.e., a factorization of $Q_{p,q}(t)$ into two irreducible quintic polynomials.
Since $Q_{p,q}(t)$ is even and satisfies $Q_{p,q}(0)\neq 0$, any such $5+5$ splitting is necessarily \emph{symmetric}, meaning that it can be written in the normal form
\[
Q_{p,q}(t)=R_{p,q}(t)\cdot \bigl(-R_{p,q}(-t)\bigr),\qquad \deg R_{p,q}=5.
\]
After a weighted normalization reducing to a one-parameter polynomial $Q_r(u)$ with $r=p/q\in\mathbb{Q}_{>0}$, coefficient comparison and elimination via resultants show that a $5+5$ splitting forces the existence of a rational point on an explicitly defined plane curve $F(r,a)=0$.
Passing to the quotient parameters $a=r y$ and $s=r^2$ yields an affine curve $f(s,y)=0$ such that, for each fixed $s>0$, the polynomial $f(s,\cdot)$ is of degree $16$.
We compute and factor the discriminant $\mathrm{Disc}_y(f)$ and then use Sturm root counts to certify that $f(s,\cdot)$ has no real roots for every rational $s>0$ with $s\neq 1$.
Hence $f(s,y)=0$ admits no rational solutions with $s>0$, $s\neq 1$, and consequently no quintic $5+5$ factorization occurs for $Q_{p,q}(t)$ when $p\neq q$.

\medskip
\noindent{\bf Keywords:}
perfect cuboid; cuboid polynomials; factorization over $\mathbb{Z}$; resultants; discriminants; Sturm theorem; computer-assisted proof.

\smallskip
\noindent{\bf Mathematics Subject Classification:} 11D41, 11Y16, 12E05, 13P05.
\end{abstract}


\section{Introduction}\label{sec:intro}
The perfect cuboid problem asks for a rectangular box with integer edges whose face diagonals and space diagonal all have integer lengths.
Several approaches encode this problem into Diophantine conditions on parameter-dependent polynomials.
In the framework introduced by R.\,A.\,Sharipov \cite{Sharipov2011Cuboids,Sharipov2011Note}, one is led to a small collection of explicit even polynomials whose irreducibility over $\mathbb{Z}$ is conjectured for coprime parameters.
The present note addresses one specific splitting pattern (degree $5+5$) for the \emph{second cuboid polynomial} $Q_{p,q}(t)$.

\medskip
\noindent\textbf{Goal.}
We prove that for coprime integers $p\neq q>0$ the polynomial $Q_{p,q}(t)$ admits no \emph{quintic} factorization of type $5+5$ over $\mathbb{Q}$ (hence none over $\mathbb{Z}$), i.e., it cannot be written as a product of two irreducible quintic polynomials.
By Lemma~\ref{lem:sym_normal}, any such $5+5$ splitting would have to be symmetric, so it suffices to exclude factorizations of the form
\[
Q_{p,q}(t)=R_{p,q}(t)\cdot \bigl(-R_{p,q}(-t)\bigr),\qquad \deg R_{p,q}(t)=5.
\]
The proof is explicit and certificate-style: the final obstruction is an interval-by-interval Sturm verification based on the discriminant stratification of a one-parameter family of degree-$16$ polynomials.

\medskip
\noindent\textbf{Outline.}
Section~\ref{sec:poly} records $Q_{p,q}(t)$ and a weighted normalization to $Q_r(u)$.
Section~\ref{sec:sym55} shows that any quintic $5+5$ splitting is necessarily symmetric and reduces it to a resultant condition $F(r,a)=0$.
Section~\ref{sec:quotient} introduces the quotient parameters $(s,y)$ and the curve $f(s,y)=0$.
Section~\ref{sec:sturm} proves that $f(s,\cdot)$ has no real roots for all rational $s>0$, $s\neq 1$ by Sturm root counts, yielding the main theorem.
Appendix~\ref{app:scripts} contains the SageMath/SymPy scripts used to reproduce the computations together with their transcripts.


\section{The second cuboid polynomial and normalization}\label{sec:poly}

\subsection{Definition of \texorpdfstring{$Q_{p,q}(t)$}{Qpq(t)}}
Let $p,q\in\mathbb{Z}_{>0}$ be coprime and $p\neq q$.
The second cuboid polynomial is the even monic degree-$10$ polynomial
\begin{align}
Q_{p,q}(t)
={}&t^{10} + (2 q^{2} + p^{2})(3 q^{2} - 2 p^{2})\, t^{8} \nonumber\\
&+ (q^{8} + 10 p^{2} q^{6} + 4 p^{4} q^{4} - 14 p^{6} q^{2} + p^{8})\, t^{6} \nonumber\\
&- p^{2} q^{2}\,(q^{8} - 14 p^{2} q^{6} + 4 p^{4} q^{4} + 10 p^{6} q^{2} + p^{8})\, t^{4} \nonumber\\
&- p^{6} q^{6}\,(q^{2} + 2 p^{2})(-2 q^{2} + 3 p^{2})\, t^{2}
- p^{10} q^{10}\in\mathbb{Z}[t].\label{eq:Qpq}
\end{align}

\subsection{Weighted normalization}
The polynomial \eqref{eq:Qpq} is weighted-homogeneous of total weight $20$ with weights
\[
\deg(p)=\deg(q)=1,\qquad \deg(t)=2.
\]
Consequently one can normalize $q=1$ by a scaling of $t$.

\begin{lemma}[Normalization to a one-parameter family]\label{lem:normalize}
Let $q\neq 0$ and set
\[
r:=\frac{p}{q}\in\mathbb{Q},\qquad u:=\frac{t}{q^{2}}\in\mathbb{Q}.
\]
Then
\begin{equation}\label{eq:normalize}
Q_{p,q}(t)=q^{20}\,Q_r(u),
\end{equation}
where
\begin{equation}\label{eq:Qr}
\begin{aligned}
Q_r(u)
={}& u^{10} + (2+r^2)(3-2r^2)\,u^{8} + \bigl(1+10r^2+4r^4-14r^6+r^8\bigr)\,u^{6}\\
& - r^2\bigl(1-14r^2+4r^4+10r^6+r^8\bigr)\,u^{4}
 - r^6(1+2r^2)(-2+3r^2)\,u^{2}
 - r^{10}\in\mathbb{Q}[u].
\end{aligned}
\end{equation}
\end{lemma}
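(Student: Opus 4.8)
The plan is to deduce the identity from the weighted-homogeneity of $Q_{p,q}(t)$ already noted, rather than by a blind substitution. First I would verify that homogeneity claim: under the assignment $\deg(p)=\deg(q)=1$ and $\deg(t)=2$, each of the six $t$-monomials appearing in \eqref{eq:Qpq} has total weight $20$. This is a finite bookkeeping check — for instance, the coefficient $(2q^{2}+p^{2})(3q^{2}-2p^{2})$ of $t^{8}$ has weight $2+2=4$ and $4+2\cdot 8=20$; the coefficient $-\,p^{2}q^{2}\,(q^{8}-14p^{2}q^{6}+4p^{4}q^{4}+10p^{6}q^{2}+p^{8})$ of $t^{4}$ has weight $2+2+8=12$ and $12+2\cdot 4=20$; the coefficient $-\,p^{6}q^{6}\,(q^{2}+2p^{2})(-2q^{2}+3p^{2})$ of $t^{2}$ has weight $6+6+2+2=16$ and $16+2\cdot 2=20$; and the extreme terms $t^{10}$ and $-\,p^{10}q^{10}$ plainly have weight $20$.

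Granting this, the substitution $(p,q,t)\mapsto(\lambda p,\lambda q,\lambda^{2}t)$ multiplies every weight-$20$ monomial by $\lambda^{20}$, so
\[
Q_{\lambda p,\,\lambda q}(\lambda^{2}t)=\lambda^{20}\,Q_{p,q}(t)\qquad\text{for all }\lambda\neq 0 .
\]
Taking $\lambda=1/q$ (legitimate since $q\neq 0$) gives $Q_{p/q,\,1}(t/q^{2})=q^{-20}\,Q_{p,q}(t)$, that is,
\[
Q_{p,q}(t)=q^{20}\,Q_{p/q,\,1}\!\left(\tfrac{t}{q^{2}}\right)=q^{20}\,Q_{r,1}(u),\qquad r=\tfrac{p}{q},\quad u=\tfrac{t}{q^{2}},
\]
which is exactly \eqref{eq:normalize}, provided one knows that the polynomial displayed in \eqref{eq:Qr} is precisely $Q_{r,1}(u)$.

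Finally I would confirm that last identification, namely that putting $q=1$ and $p=r$ in \eqref{eq:Qpq} reproduces \eqref{eq:Qr} coefficient by coefficient: $(2q^{2}+p^{2})(3q^{2}-2p^{2})\mapsto(2+r^{2})(3-2r^{2})$, then $q^{8}+10p^{2}q^{6}+4p^{4}q^{4}-14p^{6}q^{2}+p^{8}\mapsto 1+10r^{2}+4r^{4}-14r^{6}+r^{8}$, and likewise for the remaining two coefficients and the constant term $-\,p^{10}q^{10}\mapsto -\,r^{10}$. I do not expect any genuine obstacle in this lemma — every step is a routine substitution; the only place that demands a little care is the weight tally in the homogeneity check, together with the purely notational convention $Q_{r}:=Q_{r,1}$, which is what makes the coefficient comparison just described sufficient.
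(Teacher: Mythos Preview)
Your argument is correct and follows exactly the route the paper takes: verify the weighted-homogeneity of total weight $20$, scale by $\lambda=1/q$, and then read off $Q_r(u)=Q_{r,1}(u)$ from \eqref{eq:Qpq} with $q=1$. The paper compresses all of this into a single sentence (``Substitute $p=rq$ and $t=q^{2}u$ \ldots\ and factor out $q^{20}$, using the weight check above''), but the content is identical.
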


\begin{proof}
Substitute $p=rq$ and $t=q^2 u$ into \eqref{eq:Qpq} and factor out $q^{20}$, using the weight check above.
\end{proof}

\begin{corollary}[Integral vs.\ rational reduction of the symmetric normal form]\label{cor:int_to_rat}
Let $p,q\in\mathbb{Z}_{>0}$ with $q\neq 0$ and set $r=p/q$ and $u=t/q^2$ as in Lemma~\ref{lem:normalize}.
If there exists a quintic $R_{p,q}(t)\in\mathbb{Z}[t]$ such that
\[
Q_{p,q}(t)=R_{p,q}(t)\cdot\bigl(-R_{p,q}(-t)\bigr),
\]
then $Q_r(u)$ admits a symmetric $5+5$ factorization over $\mathbb{Q}$.
More precisely, defining
\[
\widetilde{R}(u):=q^{-10}\,R_{p,q}(q^{2}u)\in\mathbb{Q}[u],
\]
we have
\[
Q_r(u)=\widetilde{R}(u)\cdot\bigl(-\widetilde{R}(-u)\bigr).
\]
\end{corollary}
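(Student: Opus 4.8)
The plan is to treat this as a pure change-of-variables statement and read it off directly from Lemma~\ref{lem:normalize}. Rearranging \eqref{eq:normalize} with $t=q^{2}u$ gives the identity $Q_r(u)=q^{-20}\,Q_{p,q}(q^{2}u)$ as polynomials in $u$ over $\mathbb{Q}$. The entire content of the corollary is that the hypothesized symmetric factorization of $Q_{p,q}(t)$ survives this rescaling and that the rescaled quintic factor has rational coefficients.

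First I would substitute $t=q^{2}u$ into the assumed identity $Q_{p,q}(t)=R_{p,q}(t)\cdot\bigl(-R_{p,q}(-t)\bigr)$, obtaining $Q_{p,q}(q^{2}u)=R_{p,q}(q^{2}u)\cdot\bigl(-R_{p,q}(-q^{2}u)\bigr)$. Combining this with the rearranged form of Lemma~\ref{lem:normalize} yields $Q_r(u)=q^{-20}\,R_{p,q}(q^{2}u)\cdot\bigl(-R_{p,q}(-q^{2}u)\bigr)$. Next I would split the scalar as $q^{-20}=q^{-10}\cdot q^{-10}$ and distribute one factor of $q^{-10}$ onto each of the two polynomial factors. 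With $\widetilde{R}(u):=q^{-10}R_{p,q}(q^{2}u)$ we have $q^{-10}R_{p,q}(q^{2}u)=\widetilde{R}(u)$, and since evaluation at $-u$ commutes with the scaling $t=q^{2}u$ we also have $q^{-10}R_{p,q}(-q^{2}u)=\widetilde{R}(-u)$; the overall sign is untouched, so the product becomes exactly $\widetilde{R}(u)\cdot\bigl(-\widetilde{R}(-u)\bigr)$, which is the claimed symmetric $5+5$ factorization.

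It then remains only to record the bookkeeping: since $R_{p,q}\in\mathbb{Z}[t]$ and $q$ is a nonzero integer, $\widetilde{R}(u)=q^{-10}R_{p,q}(q^{2}u)$ lies in $\mathbb{Q}[u]$; and writing $R_{p,q}(t)=c t^{5}+\cdots$ with $c\neq 0$ gives $R_{p,q}(q^{2}u)=c q^{10}u^{5}+\cdots$, hence $\widetilde{R}(u)=c u^{5}+\cdots$ and $\deg\widetilde{R}=5$. I do not expect any genuine obstacle here. The only point requiring a little care is the power-of-$q$ accounting — the weight-$20$ homogeneity of $Q_{p,q}$ forces precisely the split $q^{-20}=q^{-10}\cdot q^{-10}$ that matches the two quintic factors — together with the trivial observation that the involution $t\mapsto-t$ is carried to $u\mapsto-u$ under $t=q^{2}u$, which is exactly what makes the symmetric normal form transport cleanly. (Comparing leading coefficients on both sides also forces $c^{2}=1$, but this is not needed for the statement.)
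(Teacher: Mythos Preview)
Your argument is correct and is essentially identical to the paper's own proof: both substitute $t=q^{2}u$, invoke Lemma~\ref{lem:normalize} to pull out $q^{20}$, and then split this as $q^{10}\cdot q^{10}$ to match the definition of $\widetilde{R}$. The extra bookkeeping you include (rationality and degree of $\widetilde{R}$, the remark on $c^{2}=1$) goes slightly beyond what the paper writes but does not change the approach.
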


\begin{proof}
Substitute $t=q^2u$ into the assumed factorization and divide by $q^{20}$.
Since $R_{p,q}(q^2u)=q^{10}\widetilde{R}(u)$, we obtain
\[
q^{20}Q_r(u)=q^{10}\widetilde{R}(u)\cdot\bigl(-q^{10}\widetilde{R}(-u)\bigr),
\]
hence the claimed identity in $\mathbb{Q}[u]$.
\end{proof}

\begin{remark}\label{rem:domain}
Throughout the paper we focus on $r\in\mathbb{Q}_{>0}$ (since $p,q>0$) and $r\neq 1$ (since $p\neq q$).
\end{remark}


\section{Quintic \texorpdfstring{$5+5$}{5+5} factorization and the elimination curve}\label{sec:sym55}

\subsection{Reduction to the symmetric normal form}

\begin{definition}[Quintic $5+5$ factorization]\label{def:55}
Let $K$ be a field of characteristic $0$ and let $Q(u)\in K[u]$ be an even monic polynomial of degree $10$ with $Q(0)\neq 0$.
We say that $Q$ admits a \emph{quintic $5+5$ factorization over $K$} (or a \emph{$5+5$ splitting}) if there exist monic \emph{irreducible} quintics $A(u),B(u)\in K[u]$ such that
\[
Q(u)=A(u)B(u),\qquad \deg A=\deg B=5.
\]
\end{definition}

\begin{lemma}[Symmetric normal form]\label{lem:sym_normal}
In the setting of Definition~\ref{def:55}, if $Q$ admits a quintic $5+5$ factorization over $K$, then there exists a monic quintic $R(u)\in K[u]$ such that
\[
Q(u)=R(u)\cdot \bigl(-R(-u)\bigr).
\]
\end{lemma}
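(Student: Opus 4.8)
The plan is to combine the evenness of $Q$ with unique factorization in the polynomial ring $K[u]$; no normalization or the later curve machinery is needed here. First I would note that evenness gives $Q(u)=Q(-u)=A(-u)B(-u)$. Since $A$ and $B$ are monic of odd degree $5$, the polynomials $-A(-u)$ and $-B(-u)$ are again monic of degree $5$; moreover the substitution $u\mapsto -u$ is a $K$-algebra automorphism of $K[u]$, so $-A(-u)$ and $-B(-u)$ are irreducible precisely because $A$ and $B$ are. Hence $Q(u)=\bigl(-A(-u)\bigr)\cdot\bigl(-B(-u)\bigr)$ is a second factorization of $Q$ into monic irreducible quintics.

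Next I would invoke uniqueness of factorization: in $K[u]$ a monic polynomial factors into monic irreducibles uniquely up to order, so the multisets $\{A,B\}$ and $\{-A(-u),\,-B(-u)\}$ coincide. This leaves exactly two possibilities:
\[
\text{(i)}\quad A(u)=-A(-u)\ \text{ and }\ B(u)=-B(-u),\qquad\qquad \text{(ii)}\quad A(u)=-B(-u)\ \text{ and }\ B(u)=-A(-u),
\]
where in case (ii) the two displayed identities are equivalent to one another.

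Finally I would rule out case (i) using the hypothesis $Q(0)\neq 0$. The identity $A(u)=-A(-u)$ says that $A$ is an odd polynomial, so $A(0)=-A(0)$; since $\operatorname{char}K=0$ this forces $A(0)=0$, whence $Q(0)=A(0)B(0)=0$, a contradiction. Therefore only case (ii) can occur, and setting $R:=A$ we obtain $B(u)=-R(-u)$ and hence $Q(u)=A(u)B(u)=R(u)\cdot\bigl(-R(-u)\bigr)$, with $R$ monic of degree $5$, as claimed. The only genuinely delicate point is the exclusion of the ``self-anti-symmetric'' case (i): it is exactly there that $Q(0)\neq 0$ and characteristic zero are used, while everything else is routine bookkeeping about degrees and leading coefficients.
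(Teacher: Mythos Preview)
Your proof is correct and follows essentially the same route as the paper: use evenness to obtain a second factorization, invoke unique factorization in $K[u]$ to match factors, and eliminate the self-paired case via $Q(0)\neq 0$. The only cosmetic difference is that you normalize to the monic polynomials $-A(-u),\,-B(-u)$ before comparing, whereas the paper compares $A(-u)$ to associates of $A$ or $B$ and separately dismisses the ``$A$ even'' sub-case; your normalization absorbs that sub-case automatically.
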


\begin{proof}
Let $Q(u)=A(u)B(u)$ with $A,B$ monic irreducible quintics.
Since $Q$ is even, we have $Q(u)=Q(-u)=A(-u)B(-u)$.
Because $Q(0)\neq 0$, we have $A(0)B(0)=Q(0)\neq 0$, hence $A(0)\neq 0$ and $B(0)\neq 0$.

The polynomial $A(-u)$ is irreducible of degree $5$ and divides $Q$.
By unique factorization in $K[u]$, it must be an associate of $A$ or of $B$.

It cannot be an associate of $A$: indeed, if $A(-u)=\pm A(u)$, then $A$ is even or odd.
An odd polynomial satisfies $A(0)=0$, contradiction.
An even polynomial cannot have odd degree $5$ unless it is identically zero, also impossible.
Therefore $A(-u)$ is associate to $B$.

Because $A$ and $B$ are monic while $A(-u)$ has leading coefficient $-1$, we obtain $B(u)=-A(-u)$, and the claim follows with $R=A$.
\end{proof}

\subsection{The symmetric ansatz}
\begin{definition}[Symmetric $5+5$ factorization]\label{def:sym55}
Let $K$ be a field of characteristic $0$ and let $Q(u)\in K[u]$ be an even polynomial of degree $10$ with $Q(0)\neq 0$.
We say that $Q$ admits a \emph{symmetric $5+5$ factorization over $K$} if there exists a quintic polynomial
\[
R(u)=u^5+a u^4+b u^3+c u^2+d u+e\in K[u]
\]
such that
\begin{equation}\label{eq:sym55}
Q(u)=R(u)\cdot\bigl(-R(-u)\bigr).
\end{equation}
By Lemma~\ref{lem:sym_normal}, every quintic $5+5$ factorization in the sense of Definition~\ref{def:55} admits such a symmetric normal form.
\end{definition}

\begin{proposition}[Coefficient comparison]\label{prop:coeff}
Fix $r\in\mathbb{Q}\setminus\{0\}$.
If $Q_r(u)$ admits a quintic $5+5$ factorization over $\mathbb{Q}$ in the sense of Definition~\ref{def:55}, then there exist $(a,d)\in\mathbb{Q}^2$ such that the two equations
\begin{equation}\label{eq:E2E3}
E_2(r,a,d)=0,\qquad E_3(r,a,d)=0
\end{equation}
hold, where $E_2,E_3\in\mathbb{Z}[r,a,d]$ are the numerators of the rational expressions obtained from the coefficient comparison of
$Q_r(u)-R(u)\bigl(-R(-u)\bigr)$ after solving the linear constraints for $b,c,e$.
\end{proposition}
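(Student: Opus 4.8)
The starting point is Lemma~\ref{lem:sym_normal}: a quintic $5+5$ factorization of $Q_r(u)$ over $\mathbb{Q}$ rewrites as the symmetric normal form $Q_r(u)=R(u)\cdot\bigl(-R(-u)\bigr)$ with $R(u)=u^5+au^4+bu^3+cu^2+du+e\in\mathbb{Q}[u]$ monic of degree $5$, so the whole statement is about the identity obtained by expanding this product. I would first record that $-R(-u)=u^5-au^4+bu^3-cu^2+du-e$ and that $R(u)\bigl(-R(-u)\bigr)=-R(u)R(-u)$ is automatically an even polynomial of degree $10$ with leading coefficient $1$; since $Q_r$ is likewise even and monic, the $u^{10}$-coefficient matches trivially and the coefficients of $u^{9},u^{7},u^{5},u^{3},u$ impose no condition. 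What remains are exactly the five equations obtained from the coefficients of $u^{8},u^{6},u^{4},u^{2},u^{0}$:
\begin{align*}
2b-a^2 &= (2+r^2)(3-2r^2),\\
2d-2ac+b^2 &= 1+10r^2+4r^4-14r^6+r^8,\\
-2ae+2bd-c^2 &= -r^2\bigl(1-14r^2+4r^4+10r^6+r^8\bigr),\\
d^2-2ce &= -r^6(1+2r^2)(-2+3r^2),\\
e^2 &= r^{10}.
\end{align*}

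I would then eliminate $b,c,e$ in favour of the parameters $(a,d)$. The $u^{8}$-equation is linear in $b$ and gives $b=\tfrac12\bigl(a^2+(2+r^2)(3-2r^2)\bigr)\in\mathbb{Q}[r,a]$. The constant-term equation forces $e^2=r^{10}$, hence $e\neq 0$ because $r\neq 0$; replacing $R$ by $-R(-\cdot)$ if necessary — which swaps the two factors in the symmetric normal form and sends $(a,b,c,d,e)$ to $(-a,b,-c,d,-e)$ — we may take $e=-r^5$. With $e$ thus fixed, the $u^{2}$-equation is linear in $c$ and gives $c=\tfrac{1}{2e}\bigl(d^2+r^6(1+2r^2)(-2+3r^2)\bigr)\in\mathbb{Q}[r,d]$. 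Substituting these expressions for $b,c,e$ into the two surviving equations — the coefficients of $u^{6}$ and $u^{4}$ — turns each into an identity between rational functions of $r,a,d$ whose denominators are products of powers of $2$ and $r$; clearing those denominators yields polynomials $E_3(r,a,d)$ (from the $u^{6}$-equation) and $E_2(r,a,d)$ (from the $u^{4}$-equation) in $\mathbb{Z}[r,a,d]$, and the pair $(a,d)$ read off from $R$ is a common rational zero of both. That is exactly the claimed conclusion. One can dispense with the normalization $e=-r^5$ by keeping $e$ symbolic, multiplying the $u^{6}$- and $u^{4}$-equations through by suitable powers of $e$, squaring once to clear the remaining odd power of $e$, and substituting $e^2=r^{10}$; since only a necessary condition is needed, the mild enlargement of the zero set caused by squaring is irrelevant, and the output again lies in $\mathbb{Z}[r,a,d]$.

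There is no conceptual obstacle; the work is in the bookkeeping, and the main points to watch are these. First, no spurious non-degeneracy hypothesis should creep in: I solve for $c$ using $e\neq 0$, which holds for \emph{every} admissible $r$, rather than using $a\neq 0$, so the elimination is unconditional. Second, the only denominators produced are units times powers of $2$ and of $r$, so clearing them keeps $E_2,E_3$ in $\mathbb{Z}[r,a,d]$ rather than in $\mathbb{Q}(r)[a,d]$. Third, the elimination is invoked in one direction only — the factorization forces $E_2=E_3=0$, and nothing about a converse is claimed — so spurious solutions introduced by the squaring trick or by the sign choice for $e$ need not be tracked here; ruling out the rational zeros of the system $\{E_2=E_3=0\}$ is deferred to the later sections. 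The expansion, substitution and simplification themselves are purely mechanical and are carried out with a computer algebra system in the appendix; the proposition simply names their output.
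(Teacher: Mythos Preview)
Your proof is correct and follows essentially the same route as the paper: invoke Lemma~\ref{lem:sym_normal}, write out the five even-degree coefficient equations, solve the linear ones for $b,c,e$, and substitute into the two remaining equations. Two purely cosmetic discrepancies with the paper's conventions: the paper normalizes to $e=r^5$ rather than $e=-r^5$, and it labels the $u^6$-equation as $E_2$ and the $u^4$-equation as $E_3$ (the reverse of your naming).
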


\begin{proof}
Assume that $Q_r(u)$ admits a quintic $5+5$ factorization over $\mathbb{Q}$ in the sense of Definition~\ref{def:55}. By Lemma~\ref{lem:sym_normal} we may write it in the symmetric normal form \eqref{eq:sym55} for some monic quintic $R(u)$, and we expand $R(u)\bigl(-R(-u)\bigr)$ and compare coefficients with \eqref{eq:Qr}.

The constant term gives $e^2=r^{10}$, so (over $\mathbb{Q}$) $e=\pm r^5$.
If $e=-r^5$, then replacing $R(u)$ by the monic quintic
\[
\widetilde R(u):=-R(-u)
\]
preserves the product \(\,R(u)\bigl(-R(-u)\bigr)\) while sending the constant term \(e\) to \(-e=r^5\).
Thus we may assume \emph{without loss of generality and without breaking monicity} that \(e=r^5\).

The $u^8$ coefficient gives a linear equation determining $b$ in terms of $r$ and $a$; the $u^2$ coefficient gives a linear equation determining $c$ in terms of $r$ and $d$.
Substituting these into the remaining two nontrivial coefficient equations yields \eqref{eq:E2E3}.
This derivation is carried out symbolically in Script~01 (Appendix~\ref{app:scripts}).
\end{proof}

\begin{proposition}[Resultant condition $F(r,a)=0$]\label{prop:F}
Let $r\in\mathbb{Q}\setminus\{0\}$.
If $Q_r(u)$ admits a quintic $5+5$ factorization over $\mathbb{Q}$ in the sense of Definition~\ref{def:55}, then there exists $a\in\mathbb{Q}$ such that
\begin{equation}\label{eq:F}
F(r,a)=0,
\end{equation}
where $F\in\mathbb{Z}[r,a]$ is the (primitive) elimination resultant
\[
F(r,a)=\mathrm{prim}\Bigl(\frac{1}{r^{20}}\mathrm{Res}_d\bigl(E_2(r,a,d),E_3(r,a,d)\bigr)\Bigr).
\]
Moreover, $F$ satisfies
\[
\deg_r F=24,\qquad \deg_a F=16,\qquad F(-r,-a)=F(r,a),
\]
and the geometric genus of the nonsingular projective model of the curve $F(r,a)=0$ equals $21$.
\end{proposition}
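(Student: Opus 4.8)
The plan is to establish Proposition~\ref{prop:F} in two logically separate halves: first the \emph{existence} of a rational point on $F(r,a)=0$ forced by a $5+5$ factorization, and then the \emph{geometric} assertions about the curve $F(r,a)=0$ itself. For the first half, I would start from Proposition~\ref{prop:coeff}, which already gives me rational $a,d$ with $E_2(r,a,d)=E_3(r,a,d)=0$. Viewing $E_2,E_3\in(\mathbb{Z}[r,a])[d]$ as polynomials in the single variable $d$ with coefficients in $\mathbb{Q}(r,a)$, the vanishing of both at the common value $d\in\mathbb{Q}$ means they share a root, hence their resultant $\mathrm{Res}_d(E_2,E_3)$ vanishes at $(r,a)$ — provided neither polynomial degenerates in $d$ (leading coefficient in $d$ not identically zero, and not dropping rank at that specialization). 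I would check, via Script~01, that the $d$-leading coefficients of $E_2$ and $E_3$ are nonzero for $r\neq 0$ (or handle the degenerate locus separately by direct inspection), so that the classical resultant vanishing criterion applies. Dividing by the explicit power $r^{20}$ (a unit in $\mathbb{Q}(r)$) and taking the primitive part changes the zero locus only by removing spurious factors of $r$, which is harmless since $r\neq 0$; thus $F(r,a)=0$, proving \eqref{eq:F}.

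For the degree and symmetry claims, the cleanest route is direct symbolic computation: expand $\mathrm{Res}_d(E_2,E_3)$ in SageMath/SymPy, divide out $r^{20}$, extract the primitive part, and read off $\deg_r F=24$ and $\deg_a F=16$. The symmetry $F(-r,-a)=F(r,a)$ I would deduce structurally rather than by brute expansion: from \eqref{eq:Qr} one sees $Q_{-r}(u)=Q_r(u)$ (every coefficient is a polynomial in $r^2$), and in the coefficient comparison the substitution $r\mapsto -r$, $a\mapsto -a$, $d\mapsto -d$ sends the symmetric ansatz $R(u)=u^5+au^4+\cdots$ to a relabelled version of the same system — so $E_2,E_3$ transform into $\pm$ themselves under $(r,a,d)\mapsto(-r,-a,-d)$, and the resultant in $d$ then inherits invariance under $(r,a)\mapsto(-r,-a)$. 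One still wants a computational confirmation, which Script~01 provides.

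The genus computation is where I expect the real work to lie. The plan is to treat $F(r,a)=0$ as a projective plane curve of degree $\max(\deg_r F,\deg_a F)$ — here likely the total degree is larger than either partial degree, so I would first compute the true total degree $D$ of $F$. The arithmetic genus of a smooth plane curve of degree $D$ is $\binom{D-1}{2}$, and the stated genus $21$ is far below that, so $F=0$ must be highly singular; the genus drop is $\sum_P \delta_P$ over singular points $P$ (including infinitely near ones and points at infinity). Rather than classifying every singularity by hand, I would invoke a computer-algebra genus routine — Singular's \texttt{genus} command, or Sage's \texttt{Curve(...).genus()} via the integral-closure/Hamburger--Noether algorithm — applied to the homogenization of $F$. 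This reduces the claim ``genus $=21$'' to a certified run of a standard algorithm, consistent with the paper's computer-assisted style.

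The main obstacle is the genus assertion: it is the only part not reducible to elementary resultant manipulation, and a fully human-checkable proof would require enumerating the singular locus of a bidegree-$(24,16)$ curve together with its behaviour at infinity and all infinitely near points — a substantial resolution-of-singularities computation. I would therefore present the genus statement as the output of a named script (extending Appendix~\ref{app:scripts}), noting that it is not logically needed for the main theorem (which only uses the \emph{existence} of a rational point on $F=0$, subsequently refined in Sections~\ref{sec:quotient}--\ref{sec:sturm}) but is recorded for context. The resultant nondegeneracy check in the first half is a minor secondary obstacle: one must confirm the $d$-degrees of $E_2,E_3$ do not collapse on the relevant locus, which I would verify symbolically and flag in the proof.
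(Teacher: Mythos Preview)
Your proposal is correct and follows essentially the same route as the paper: the paper's own proof is a one-line deferral to Script~01, which performs exactly the resultant elimination, degree/symmetry checks, and Sage \texttt{geometric\_genus()} call that you outline. Two small remarks: the nondegeneracy worry is unnecessary because $\mathrm{Res}_d(E_2,E_3)\in (E_2,E_3)\subset\mathbb{Z}[r,a][d]$, so a common rational root of $E_2,E_3$ forces the resultant to vanish unconditionally; and in your structural symmetry sketch the correct involution is $(r,a,d)\mapsto(-r,-a,d)$ (under which $E_2\mapsto -E_2$ and $E_3\mapsto E_3$, since $\mathrm{lc}_d(E_3)=-1$ is constant), not $d\mapsto -d$ --- though, as you note, Script~01 confirms $F(-r,-a)=F(r,a)$ directly.
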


\begin{proof}[Computational proof]
The elimination and degree/symmetry checks, as well as the genus computation via normalization, are performed in Script~01 (Appendix~\ref{app:scripts}).
\end{proof}

\begin{remark}\label{rem:faltings}
Since the curve $F(r,a)=0$ has geometric genus $21$, Faltings' theorem implies that it has only finitely many rational points \cite{Faltings}.
However, finiteness alone does not exclude the existence of exceptional parameter values $r\in\mathbb{Q}_{>0}$ producing a quintic $5+5$ factorization.
The remainder of the paper provides an explicit exclusion for all $r\in\mathbb{Q}_{>0}$ with $r\neq 1$.
\end{remark}


\section{Quotient substitution \texorpdfstring{$(r,a)\mapsto (s,y)$}{(r,a)->(s,y)}}\label{sec:quotient}

The polynomial $F(r,a)$ is invariant under $(r,a)\mapsto (-r,-a)$.
It is therefore natural to introduce quotient parameters.

\begin{definition}\label{def:sy}
Define new variables $s,y$ by
\begin{equation}\label{eq:substy}
a=r\,y,\qquad s=r^2.
\end{equation}
Let $f(s,y)\in\mathbb{Z}[s,y]$ be defined by rewriting $F(r,r y)$ using the rule $r^{2k}\mapsto s^k$.
\end{definition}

\begin{proposition}\label{prop:f}
The polynomial $f(s,y)$ has bidegree
\[
\deg_s f=12,\qquad \deg_y f=16.
\]
Moreover, if $r\in\mathbb{Q}\setminus\{0\}$ and $F(r,a)=0$ for some $a\in\mathbb{Q}$, then with $s=r^2\in\mathbb{Q}_{>0}$ and $y=a/r\in\mathbb{Q}$ we have
\[
f(s,y)=0.
\]
\end{proposition}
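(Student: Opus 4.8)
The plan is to treat the statement in two independent pieces: the transport of rational zeros from $F$ to $f$, and the bidegree of $f$. The first piece follows formally from the symmetry recorded in Proposition~\ref{prop:F}; the second is finite degree bookkeeping on top of the explicit elimination of Script~01.

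First I set up the transport of zeros. Write $F(r,a)=\sum_{i,j}c_{ij}\,r^i a^j$ with $c_{ij}\in\mathbb{Z}$. Since $F(-r,-a)=F(r,a)$ by Proposition~\ref{prop:F}, comparing coefficients forces $c_{ij}=0$ whenever $i+j$ is odd, so
\[
F(r,r y)=\sum_{i,j}c_{ij}\,r^{\,i+j}\,y^{\,j}
\]
involves only even powers of $r$. Hence there is a unique $h\in\mathbb{Z}[s,y]$ with $F(r,r y)=h(r^2,y)$, and by Definition~\ref{def:sy} this $h$ is exactly $f$. Now let $r\in\mathbb{Q}\setminus\{0\}$ with $F(r,a)=0$ for some $a\in\mathbb{Q}$. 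Put $s=r^2\in\mathbb{Q}_{>0}$ and $y=a/r\in\mathbb{Q}$, so that $a=r y$. Then
\[
f(s,y)=h(r^2,y)=F(r,r y)=F(r,a)=0,
\]
which is the desired implication; the hypothesis $r\neq 0$ is used only to make $y=a/r$ meaningful.

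For the bidegree, observe that the rewriting rule $r^i a^j\mapsto s^{(i+j)/2}y^j$ never changes the exponent of $y$ and sends distinct pairs $(i,j)$ to distinct monomials in $s,y$. Therefore $\deg_y f=\deg_a F=16$, with no collapse of the leading $y^{16}$-coefficient (distinct powers of $r$ in the $a^{16}$-block of $F$ become distinct powers of $s$ and cannot cancel), and $\deg_s f=\tfrac12\max\{\,i+j:c_{ij}\neq 0\,\}$. Since $\deg_r F=24$ this maximum is at least $24$, giving $\deg_s f\geq 12$; the matching bound $\deg_s f\leq 12$ is the assertion that no monomial $r^i a^j$ occurring in $F$ has $i+j>24$, i.e.\ that the weighted degree of $F$ under $\deg r=\deg a=1$ equals $24$. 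This last inequality is the one point that requires computation: it is a finite check on the explicit Sylvester determinant underlying $F=\mathrm{prim}\bigl(r^{-20}\,\mathrm{Res}_d(E_2,E_3)\bigr)$, together with exactness of the division by $r^{20}$ and the passage to the $\mathbb{Z}$-primitive part, and it is performed in Script~01 (Appendix~\ref{app:scripts}). I expect this degree-stability check --- confirming that the elimination produces neither an unexpected degree drop nor an inflation --- to be the only genuine obstacle; everything else in the argument is purely formal.
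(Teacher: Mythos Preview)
Your argument is correct. The paper's own proof is a single sentence deferring everything to Script~02, which simply prints $\deg_s f=12$ and $\deg_y f=16$; you instead unpack the construction. Your parity argument from $F(-r,-a)=F(r,a)$ cleanly explains why $F(r,ry)$ contains only even powers of $r$, making the rewriting $r^{2k}\mapsto s^k$ well-defined and the zero-transport self-evident --- this part requires no computation at all. For the bidegree you correctly observe that the monomial map $r^ia^j\mapsto s^{(i+j)/2}y^j$ is injective, so nothing cancels, and you reduce $\deg_s f=12$ to the single assertion that the total degree of $F$ in $(r,a)$ is exactly $24$. One small remark: Script~01 prints the full expansion of $F$ (from which the total-degree bound is visible by inspection) and the nonzero degree-$24$ homogeneous part $H_{24}$, but it does not \emph{assert} that the total degree is $24$; the cleanest certificate for that bound is actually Script~02's direct output $\deg_s f=12$, which is what the paper cites. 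Either way your reduction is sound, and your presentation has the advantage of isolating exactly which step genuinely needs the machine.
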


\begin{proof}
The construction is purely algebraic and is executed in Script~02 (Appendix~\ref{app:scripts}), which prints the degrees.
\end{proof}


\section{Discriminant stratification and Sturm certificates}\label{sec:sturm}

Fix $s\in\mathbb{R}$ and view $f(s,y)$ as a univariate polynomial in $y$.
Let
\[
\Delta(s):=\mathrm{Disc}_y\bigl(f(s,y)\bigr)\in\mathbb{Z}[s]
\]
be its discriminant.
Sturm's theorem provides an exact algorithm to count real roots of a univariate polynomial on an interval \cite{Sturm}.

\begin{proposition}[Discriminant factorization]\label{prop:disc}
The discriminant $\Delta(s)$ factors over $\mathbb{Z}$ as
\begin{equation}\label{eq:disc}
\Delta(s)=C\cdot s^{156}(s-1)^{54}(s+1)^{22}P_6(s)^{4}P_{28}(s)^{2},
\end{equation}
where $C\in\mathbb{Z}\setminus\{0\}$ is a constant,
\begin{equation}\label{eq:P6}
P_6(s)=s^{6}+15s^{5}-1585s^{4}+3052s^{3}-1585s^{2}+15s+1,
\end{equation}
and $P_{28}(s)\in\mathbb{Z}[s]$ is an explicitly determined polynomial of degree $28$ (listed in Appendix~\ref{app:scripts} via the transcript of Script~02).
In particular, for $s>0$ and $s\neq 1$, the only possible values where the number of real roots of $f(s,\cdot)$ may change are the positive real roots of $P_6$ and $P_{28}$.
\end{proposition}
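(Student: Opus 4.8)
The assertion is a single polynomial identity over $\mathbb{Z}$, so the plan is computational and certificate-style: first produce $\Delta(s)$ explicitly, then factor it, and — most importantly — verify the factorization by an independent re-expansion that does not rely on trusting any factoring routine. Starting from the explicit $f(s,y)\in\mathbb{Z}[s,y]$ of Proposition~\ref{prop:f}, write $f(s,y)=\sum_{j=0}^{16}a_j(s)\,y^j$ with $a_j\in\mathbb{Z}[s]$ and $\deg_s a_j\le 12$, and compute
\[
\Delta(s)=\mathrm{Disc}_y(f)=\frac{(-1)^{120}}{a_{16}(s)}\,\mathrm{Res}_y\bigl(f,\partial_y f\bigr)\in\mathbb{Z}[s],
\]
either via a built-in discriminant routine or as the $31\times 31$ Sylvester determinant of $f$ and $\partial_y f$ over $\mathbb{Z}[s]$ divided by the $y$-leading coefficient $a_{16}(s)$. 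Since the discriminant is homogeneous of degree $2\cdot 16-2=30$ in the $a_j$ and each $a_j$ has $s$-degree at most $12$, one has a priori $\deg_s\Delta\le 360$, so the computation is large but entirely finite; I would keep $\Delta(s)$ in fully expanded integer form so that the following step becomes a deterministic check.

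Next, factor $\Delta(s)$ over $\mathbb{Z}[s]$ with a computer algebra system and read off the shape $C\,s^{156}(s-1)^{54}(s+1)^{22}P_6(s)^{4}P_{28}(s)^{2}$ with $P_6$ as in \eqref{eq:P6} and $P_{28}$ the remaining degree-$28$ cofactor recorded in Appendix~\ref{app:scripts}. The mathematically load-bearing step is \emph{independent of the factoring algorithm}: one multiplies the right-hand side of \eqref{eq:disc} back out and checks equality with $\Delta(s)$ coefficient by coefficient in $\mathbb{Z}[s]$. As consistency checks I would confirm the degree bookkeeping $156+54+22+4\cdot 6+2\cdot 28=312\le 360$; verify that the $s$-adic valuation of $\Delta$ is exactly $156$ and that $(s-1)$, $(s+1)$ occur to exact orders $54$ and $22$ (e.g.\ by evaluating $\Delta(s)/s^{156}$ at $s=0$, and similarly at $s=\pm 1$); note the structural fact that $P_6$ is a reciprocal (palindromic) polynomial; and check $\gcd(P_6,P_{28})=1$ together with $\gcd\bigl(s(s-1)(s+1),P_6P_{28}\bigr)=1$, so that the displayed exponents really are the multiplicities.

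For the ``in particular'' clause, recall that for fixed real $s$ the number of real roots of the univariate polynomial $f(s,\cdot)$ is locally constant in $s$ on the open set where $a_{16}(s)\neq 0$ and $\Delta(s)\neq 0$: a real root appears or disappears only when two roots collide (so $\Delta(s)=0$) or when a root escapes to $\infty$ as the leading coefficient degenerates (so $a_{16}(s)=0$). I would check separately that $a_{16}(s)$ has no positive real zero on $\{s>0,\ s\neq 1\}$ outside the listed points — its structure, a power of $s$ times a factor of small degree, makes this immediate. On $\{s>0,\ s\neq 1\}$ the factors $s$ and $s-1$ are excluded by hypothesis and $s+1$ has no positive real zero, so $\Delta(s)=0$ there exactly at the positive real roots of $P_6$ and $P_{28}$; hence the real-root count of $f(s,\cdot)$ is constant on each of the finitely many open intervals cut out by those points, which is precisely the stratification used in Section~\ref{sec:sturm}. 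The one genuine difficulty is scale: $\Delta(s)$ has degree $312$ with very large integer coefficients and the degree-$28$ cofactor $P_{28}$ must be extracted and recorded by machine — but since the end product is a single polynomial identity over $\mathbb{Z}$, its verification is a finite, deterministic computation, and that is what upgrades the computer output to a proof.
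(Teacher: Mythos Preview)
Your proposal is correct and follows the same computational route as the paper, which simply defers to Script~02 in Appendix~\ref{app:scripts} to compute and factor $\Delta(s)$. Your write-up is in fact more careful than the paper's one-line ``computational proof'': you insist on an independent re-expansion of \eqref{eq:disc} as a coefficient-wise identity in $\mathbb{Z}[s]$ (so correctness does not rest on trusting the factoring routine), you record the degree bookkeeping $156+54+22+24+56=312$, and you separate out the leading-coefficient condition $a_{16}(s)\neq 0$ for the ``in particular'' clause --- the paper only mentions later (in the proof of Theorem~\ref{thm:noreal}) that $a_{16}(s)$ is proportional to $s^8$, which confirms your parenthetical expectation.
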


\begin{proof}[Computational proof]
Computed and printed by Script~02 (Appendix~\ref{app:scripts}).
\end{proof}

\begin{lemma}[Constancy between discriminant roots]\label{lem:sturm_const}
Let $I\subset\mathbb{R}$ be an open interval on which $\Delta(s)\neq 0$ and the leading coefficient of $f(s,y)$ with respect to $y$ does not vanish.
Then the number of real roots of $f(s,\cdot)$ is constant for all $s\in I$.
\end{lemma}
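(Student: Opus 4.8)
The plan is to run a standard continuity-of-roots argument, in which the two standing hypotheses are precisely the conditions that rule out the two ways in which the real-root count of a one-parameter polynomial family can jump: a root escaping to (or arriving from) infinity, and a complex-conjugate pair colliding on the real axis (equivalently, a real double root splitting off a conjugate pair). Neither can occur on $I$, so the count is locally constant, hence constant on the connected interval $I$.

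First I would pin down the degree. Write $f(s,y)=\ell(s)\,y^{n}+(\text{lower-order terms in } y)$, where $\ell(s)\in\mathbb{Z}[s]$ is the leading coefficient with respect to $y$ and $n\le\deg_y f=16$. By hypothesis $\ell(s)\neq 0$ for all $s\in I$, so $\deg_y f(s,\cdot)=n$ is constant on $I$, and $g_s(y):=f(s,y)/\ell(s)$ is, for each $s\in I$, a monic degree-$n$ polynomial whose coefficients are rational functions of $s$ with no pole on $I$, hence continuous functions of $s$ there. Next I would invoke continuity of roots: the map sending a monic degree-$n$ polynomial to its multiset of complex roots is continuous (via Rouché's theorem, or via the companion-matrix spectrum together with continuity of eigenvalues), so $s\mapsto\{\text{roots of }g_s\}$ is a continuous family of $n$-element multisets in $\mathbb{C}$. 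Because $\Delta(s)=\mathrm{Disc}_y f(s,y)$ does not vanish on $I$ and $\ell(s)\neq 0$, each $g_s$ has $n$ \emph{distinct} roots; these are either real or occur in conjugate pairs, and no two of them ever coincide as $s$ ranges over $I$. Finally, fixing $s_0\in I$ and letting $y_1<\cdots<y_k$ be the (simple) real roots of $g_{s_0}$, the implicit function theorem applied at each $y_j$---legitimate since $\partial_y f(s_0,y_j)\neq 0$---produces smooth branches $s\mapsto y_j(s)$ through $y_j$ that remain real and mutually distinct on a neighborhood of $s_0$; and no extra real root can appear there, since such a root would have to arrive either from infinity (excluded: the degree is pinned at $n$ and the coefficients are bounded near $s_0$) or from a colliding conjugate pair (excluded: that forces $\Delta(s)=0$). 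Hence the real-root count is locally constant, therefore constant on $I$.

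The main obstacle is making the two ``no-collision / no-escape'' claims uniform enough to conclude genuine local constancy, rather than merely the lower semicontinuity that the implicit function theorem alone supplies. I expect the cleanest route is to combine the implicit function theorem (for the branches that \emph{do} continue) with a Rouché-type estimate on a large disk in $\mathbb{C}$ (to show $g_s$ has no roots beyond those tracked from $s_0$, so the total count stays $n$ and none leaks in from infinity), while $\Delta(s)\neq 0$ forbids any conjugate pair from reaching $\mathbb{R}$; together these upgrade lower semicontinuity to local constancy. An alternative, purely algebraic proof would instead track the signs of the leading coefficients (principal subresultant coefficients) of the Sturm sequence of $f(s,\cdot)$: these vanish only at zeros of the relevant subresultants, all of which divide $\Delta(s)\cdot\ell(s)$, so on $I$ the Sturm sign pattern at $\pm\infty$ is constant and Sturm's theorem \cite{Sturm} delivers a constant count. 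Either way, the hypotheses $\ell(s)\neq 0$ and $\Delta(s)\neq 0$ are exactly what is needed, and these are the conditions verified interval-by-interval in the sequel.
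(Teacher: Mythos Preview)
Your main argument---constant degree from the nonvanishing leading coefficient, simplicity of all roots from $\Delta(s)\neq 0$, then continuity of the root multiset together with the implicit function theorem to rule out creation or annihilation of real roots---is correct and is exactly the paper's approach, only spelled out in more detail than the paper's one-paragraph sketch. One caution about your proposed alternative: the principal subresultant coefficients of $(f,\partial_y f)$ do \emph{not} in general divide $\Delta(s)\cdot\ell(s)$, so the assertion that the Sturm sign pattern at $\pm\infty$ can change only where $\Delta$ or $\ell$ vanishes would need a different justification; since your analytic argument is already complete, you can simply drop that aside.
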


\begin{proof}
On $I$, the leading coefficient is nonzero, so the degree of $f(s,\cdot)$ is constant.
Moreover $\Delta(s)\neq 0$ implies that $f(s,\cdot)$ has only simple roots, hence real roots move continuously with $s$ and cannot be created or annihilated within $I$.
Equivalently, the Sturm root count on $(-\infty,+\infty)$ is locally constant and therefore constant on the connected interval $I$.
\end{proof}

\begin{lemma}[No rational discriminant zeros for $s>0$, $s\neq 1$]\label{lem:disc_rational}
Let $s\in\mathbb{Q}_{>0}$ with $s\neq 1$.
Then \(\Delta(s)\neq 0\).
\end{lemma}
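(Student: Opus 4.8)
The plan is to read the statement straight off the explicit discriminant factorization~\eqref{eq:disc} of Proposition~\ref{prop:disc} and then reduce everything to a rational-root analysis of the two nonlinear factors $P_6$ and $P_{28}$. Fix $s\in\mathbb{Q}_{>0}$ with $s\neq 1$. Since $\Delta(s)=C\cdot s^{156}(s-1)^{54}(s+1)^{22}P_6(s)^{4}P_{28}(s)^{2}$ with $C\neq 0$, it suffices to check that none of the factors $s^{156}$, $(s-1)^{54}$, $(s+1)^{22}$, $P_6(s)$, $P_{28}(s)$ vanishes at $s$. The first three are immediate: $s>0$ gives $s^{156}>0$ and $(s+1)^{22}>0$, while $s\neq 1$ gives $(s-1)^{54}>0$. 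Hence the lemma comes down to proving $P_6(s)\neq 0$ and $P_{28}(s)\neq 0$ for every rational $s$.

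For $P_6$ I would invoke the rational root theorem. By~\eqref{eq:P6}, $P_6$ is monic with constant term $1$, so its only candidate rational roots are $\pm 1$; substituting gives $P_6(1)=-86$ and $P_6(-1)=-6250$, both nonzero. Therefore $P_6$ has no rational root at all, and in particular $P_6(s)\neq 0$.

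For $P_{28}$ the same principle applies, now using the explicit leading coefficient and constant term of $P_{28}$ recorded in the transcript of Script~02 (Appendix~\ref{app:scripts}). The rational root theorem cuts the possible rational roots down to a finite list of fractions $m/n$ with $m$ dividing the constant term and $n$ dividing the leading coefficient; evaluating $P_{28}$ at each such candidate and observing that none of them is a zero is a finite arithmetic verification (Appendix~\ref{app:scripts}). With all five factors of~\eqref{eq:disc} thereby shown to be nonzero at $s$, we conclude $\Delta(s)\neq 0$.

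The step that requires care — rather than a genuine obstacle — is the completeness of the finite check for $P_{28}$: one must be certain that the candidate list is exhausted, which rests on the exact integer coefficients of $P_{28}$ emitted by Script~02 and on the correctness of the factorization~\eqref{eq:disc} asserted in Proposition~\ref{prop:disc}. No deeper input is needed for this lemma — in particular no irreducibility statement for $P_{28}$ and no real-root information; the Sturm root counts enter only in the subsequent step, where one controls the number of real roots of $f(s,\cdot)$ on each interval between consecutive positive zeros of $P_6$ and $P_{28}$.
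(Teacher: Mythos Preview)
Your proof is correct and matches the paper's argument essentially line for line: reduce via the factorization~\eqref{eq:disc}, dispose of the factors $s$, $s-1$, $s+1$ trivially, apply the rational root theorem to the monic $P_6$ with constant term $1$ (checking $P_6(\pm1)\neq0$), and handle $P_{28}$ by the exact rational-root test in Script~02. The paper cites the script's output \texttt{Rational roots of P28: \{\}} rather than spelling out the finite divisor enumeration, but the content is the same.
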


\begin{proof}
By Proposition~\ref{prop:disc}, if \(\Delta(s)=0\) then \(s\in\{0,1,-1\}\) or \(P_6(s)=0\) or \(P_{28}(s)=0\).
Under the assumptions \(s>0\), \(s\neq 1\), only the latter two possibilities remain.

For \(P_6\), the rational root theorem applies since \(P_6\) is monic with constant term \(1\): any rational root must be \(\pm 1\), but
\[
P_6(1)= -86\neq 0,\qquad P_6(-1)=-6250\neq 0,
\]
so \(P_6\) has no rational roots.

For \(P_{28}\), Script~02 performs an exact rational-root test over \(\mathbb{Q}\) (reported in the transcript as \texttt{Rational roots of P28: \{\}}), hence \(P_{28}\) has no rational roots.
Therefore \(\Delta(s)\neq 0\) for all rational \(s>0\) with \(s\neq 1\).
\end{proof}

\begin{theorem}[No real roots for $s>0$, $s\neq 1$]\label{thm:noreal}
For every rational $s>0$ with $s\neq 1$, the univariate polynomial $f(s,y)\in\mathbb{Q}[y]$ has no real roots.
In particular, the Diophantine equation $f(s,y)=0$ has no rational solutions with $s\in\mathbb{Q}_{>0}$ and $s\neq 1$.
\end{theorem}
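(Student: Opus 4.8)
The plan is to turn the discriminant factorization of Proposition~\ref{prop:disc} into a finite, checkable certificate by combining it with the local constancy of the real-root count (Lemma~\ref{lem:sturm_const}) and the absence of rational discriminant zeros (Lemma~\ref{lem:disc_rational}). The guiding principle: on any open $s$-interval avoiding both the zero set of $\Delta(s)$ and the zero set of the leading $y$-coefficient of $f$, the number of real roots of $f(s,\cdot)$ is a fixed integer; so it suffices to partition $(0,\infty)\setminus\{1\}$ into finitely many such intervals and verify a single rational value of $s$ in each.

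First I would assemble the finite \emph{breakpoint set}
\[
B:=\{1\}\cup\{\,s>0:\ P_6(s)=0\,\}\cup\{\,s>0:\ P_{28}(s)=0\,\}\cup\{\,s>0:\ L(s)=0\,\},
\]
where $L(s)\in\mathbb{Z}[s]$ is the leading coefficient of $f(s,y)$ with respect to $y$ (so $\deg_y f(s,\cdot)=16$ exactly when $L(s)\neq 0$). For $s>0$ the factor $C\cdot s^{156}(s+1)^{22}$ in \eqref{eq:disc} has a fixed nonzero sign, while $(s-1)^{54},\,P_6(s)^4,\,P_{28}(s)^2$ are each $\ge 0$; hence $\Delta(s)=0$ with $s>0$ if and only if $s\in\{1\}\cup\{P_6=0\}\cup\{P_{28}=0\}$. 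Together with $L(s)\neq 0$ off $B$, this shows that Lemma~\ref{lem:sturm_const} applies on every connected component of $(0,\infty)\setminus B$.

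Next I would isolate the positive real roots of $P_6$, of $P_{28}$, and (unless $L$ is already a nonzero constant times a power of $s$, which one checks) of $L$, to enough precision to list the components $I_1,\dots,I_N$ of $(0,\infty)\setminus B$ and to pick a rational sample point $s_j\in I_j$ in each; since $N\le\deg P_6+\deg P_{28}+\deg L+2$, this is a short finite list. For each $j$ I would form $f(s_j,y)\in\mathbb{Q}[y]$, compute its Sturm chain with exact rational arithmetic, and count the sign changes at $\mp\infty$; the claim is that this real-root count equals $0$ for every $j$. These computations are carried out and recorded in Script~02 (Appendix~\ref{app:scripts}).

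Finally, let $s\in\mathbb{Q}_{>0}$ with $s\neq 1$ be arbitrary. By Lemma~\ref{lem:disc_rational}, $\Delta(s)\neq 0$, so $s$ is not a root of $P_6$ or $P_{28}$; a rational-root test for $L$ (again finite) shows $L$ has no positive rational zero — or, should some $s^{*}\neq 1$ occur, one simply adjoins one extra Sturm check for the then-lower-degree polynomial $f(s^{*},\cdot)$, which likewise has no real roots. Hence $s\notin B$, so $s$ lies in some component $I_j$, and Lemma~\ref{lem:sturm_const} gives that $f(s,\cdot)$ has the same number of real roots as $f(s_j,\cdot)$, namely $0$. Thus $f(s,y)$ has no real roots; in particular a rational pair $(s,y)$ with $s>0$, $s\neq 1$ and $f(s,y)=0$ would furnish a real root $y$ of $f(s,\cdot)$, a contradiction, which yields the Diophantine conclusion. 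The only genuine obstacle I anticipate is organizational: producing isolating intervals for the $28$ roots of $P_{28}$ tight enough to place the rational samples unambiguously, and then running all $N$ Sturm counts with the large exact integer coefficients of the degree-$16$ polynomials $f(s_j,y)$ — conceptually nothing beyond Proposition~\ref{prop:disc} and Lemmas~\ref{lem:sturm_const},~\ref{lem:disc_rational} is required.
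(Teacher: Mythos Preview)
Your proposal is correct and follows essentially the same approach as the paper: discriminant stratification of $(0,\infty)\setminus\{1\}$ into finitely many intervals, one exact Sturm sample per interval via Script~02, and then Lemma~\ref{lem:sturm_const} together with Lemma~\ref{lem:disc_rational} to transfer the zero count to every rational $s$. The only cosmetic difference is that the paper observes directly that the leading $y$-coefficient $L(s)$ is proportional to $s^8$, so your contingencies about possible zeros of $L$ are moot, and only the positive real roots of $P_6$ and $P_{28}$ (two and three of them, respectively) need isolation---not all $28$.
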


\begin{proof}
Fix a rational \(s>0\) with \(s\neq 1\).
By Lemma~\ref{lem:disc_rational} we have \(\Delta(s)\neq 0\).
Also the leading coefficient of \(f(s,y)\) with respect to \(y\) is nonzero for \(s>0\) (indeed it is proportional to \(s^8\)), so \(f(s,\cdot)\) has constant degree \(16\).

By Proposition~\ref{prop:disc}, the real roots of \(P_6\) and \(P_{28}\) partition \((0,+\infty)\) into finitely many open intervals on each of which \(\Delta\neq 0\).
Script~02 isolates these discriminant roots into rational brackets and chooses a rational sample point in each complementary interval:
\[
(0,\alpha_1),\ (\alpha_1,\beta_1),\ (\beta_1,1),\ (1,\beta_2),\ (\beta_2,\beta_3),\ (\beta_3,\alpha_2),\ (\alpha_2,+\infty),
\]
where \(\alpha_1,\alpha_2\) are the positive roots of \(P_6\) and \(\beta_1,\beta_2,\beta_3\) are the positive roots of \(P_{28}\).
For each sample \(s\) the script computes the Sturm root count of \(f(s,\cdot)\) on \((-\infty,+\infty)\) and obtains zero real roots in every interval (see the transcript of Script~02).

By Lemma~\ref{lem:sturm_const}, on each interval the number of real roots is constant, hence it is zero throughout the interval.
Since our fixed rational \(s\) lies in exactly one of these intervals and satisfies \(\Delta(s)\neq 0\), it follows that \(f(s,\cdot)\) has no real roots.

The last claim follows because any rational solution \(y\in\mathbb{Q}\) would in particular be a real root.
\end{proof}

\begin{remark}[The exceptional value $s=1$]
Script~02 also factors $f(1,y)$ and finds that it has a rational root $y=1$ (indeed $(y-1)^6$ divides $f(1,y)$).
This corresponds to the parameter value $r=1$ (i.e.\ $p=q$), which is excluded in our setting.
\end{remark}


\section{Main theorem}\label{sec:main}

\begin{theorem}[No quintic $5+5$ factorization]\label{thm:main}
Let $p,q\in\mathbb{Z}_{>0}$ be coprime with $p\neq q$ and let $Q_{p,q}(t)$ be as in \eqref{eq:Qpq}.
Then $Q_{p,q}(t)$ admits no quintic $5+5$ factorization of splitting type $(5,5)$ over\/ $\mathbb{Z}$, i.e., there do not exist monic irreducible quintics $A(t),B(t)\in\mathbb{Z}[t]$ such that
\[
Q_{p,q}(t)=A(t)\,B(t).
\]
Equivalently (by Gauss' lemma), $Q_{p,q}(t)$ has no $5+5$ splitting pattern in its factorization over $\mathbb{Q}[t]$.
\end{theorem}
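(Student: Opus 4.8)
\emph{Strategy.} The plan is to telescope the reductions assembled in Sections~\ref{sec:poly}--\ref{sec:sturm} into a single chain of implications ending at Theorem~\ref{thm:noreal}. I would argue by contradiction: suppose $Q_{p,q}(t) = A(t)B(t)$ with $A,B \in \mathbb{Z}[t]$ monic irreducible quintics. Since $Q_{p,q}$ is even with $Q_{p,q}(0) = -p^{10}q^{10} \neq 0$, Lemma~\ref{lem:sym_normal} (over $K=\mathbb{Q}$) rewrites this in the symmetric normal form $Q_{p,q}(t) = R_{p,q}(t)\cdot\bigl(-R_{p,q}(-t)\bigr)$ with $R_{p,q}=A$ a monic quintic. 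Next, setting $r := p/q$ and $u := t/q^{2}$, I would invoke Corollary~\ref{cor:int_to_rat} to pass to a symmetric $5+5$ factorization $Q_r(u) = \widetilde R(u)\cdot\bigl(-\widetilde R(-u)\bigr)$ over $\mathbb{Q}$; since the nonzero scaling $t = q^{2} u$ preserves irreducibility, $\widetilde R$ and $-\widetilde R(-u)$ are monic irreducible quintics, so $Q_r(u)$ admits a quintic $5+5$ factorization in the sense of Definition~\ref{def:55}. By Remark~\ref{rem:domain} we have $r\in\mathbb{Q}_{>0}$ with $r\neq 1$.

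Then I would run the elimination pipeline on $Q_r$ in sequence: Proposition~\ref{prop:coeff} supplies $(a,d)\in\mathbb{Q}^{2}$ with $E_2(r,a,d)=E_3(r,a,d)=0$; Proposition~\ref{prop:F} yields $a\in\mathbb{Q}$ with $F(r,a)=0$; and Proposition~\ref{prop:f}, under the substitution $s=r^{2}$, $y=a/r$ of Definition~\ref{def:sy}, produces $f(s,y)=0$. Here $s=r^{2}\in\mathbb{Q}_{>0}$ because $r\in\mathbb{Q}\setminus\{0\}$, and $s\neq 1$ because $r>0$ makes $r^{2}=1$ equivalent to $r=1$, which is excluded. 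Thus $f(s,y)=0$ has a rational solution with $s\in\mathbb{Q}_{>0}$, $s\neq 1$, contradicting Theorem~\ref{thm:noreal}. This gives the claimed non-existence over $\mathbb{Z}$; the equivalence with the absence of a $5+5$ pattern over $\mathbb{Q}[t]$ is Gauss' lemma, since any monic rational factor of the monic integer polynomial $Q_{p,q}$ already lies in $\mathbb{Z}[t]$.

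\emph{Main obstacle.} All the substance has been discharged in the earlier sections, so this final assembly is mostly bookkeeping — but each link of the chain must be checked to deliver exactly the hypotheses the next one consumes. The things to watch are: monicity of $R_{p,q}$ and the normalization $e=+r^{5}$ of Proposition~\ref{prop:coeff} surviving every substitution; the parameter range $r\in\mathbb{Q}_{>0}\setminus\{1\}$ mapping precisely onto $s\in\mathbb{Q}_{>0}\setminus\{1\}$ under $s=r^{2}$; and the nonvanishing, on the relevant range, of the leading coefficient of $f(s,\cdot)$ in $y$, proportional to $s^{8}$, and of the constant $C$ in \eqref{eq:disc}. The one step that cannot be reduced to formal manipulation, and hence the real obstacle, is the certified real-root count behind Theorem~\ref{thm:noreal}: establishing the discriminant factorization \eqref{eq:disc}, verifying that $P_6$ and $P_{28}$ have no positive rational roots, and carrying out the interval-by-interval Sturm computation that shows $f(s,\cdot)$ has zero real roots on each of the seven complementary subintervals of $(0,+\infty)$. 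It is this computation — not any single closed-form step — that renders the exclusion unconditional, in contrast to the merely finite conclusion from Faltings' theorem noted in Remark~\ref{rem:faltings}.
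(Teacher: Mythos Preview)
Your proof is correct and follows essentially the same chain of reductions as the paper's own proof: contradiction $\to$ Lemma~\ref{lem:sym_normal} $\to$ Corollary~\ref{cor:int_to_rat} $\to$ Proposition~\ref{prop:F} $\to$ Proposition~\ref{prop:f} $\to$ Theorem~\ref{thm:noreal}. You are in fact slightly more careful than the paper in explicitly noting that the scaling $t=q^{2}u$ preserves irreducibility, so that $Q_r(u)$ genuinely satisfies the hypothesis of Definition~\ref{def:55} needed to invoke Proposition~\ref{prop:F} as stated; the paper glosses over this because the symmetric normal form alone suffices for the downstream coefficient comparison.
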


\begin{proof}
Assume, for contradiction, that $Q_{p,q}(t)$ admits a quintic $5+5$ factorization over $\mathbb{Z}$, so that
\[
Q_{p,q}(t)=A(t)B(t)
\]
for some monic irreducible quintics $A(t),B(t)\in\mathbb{Z}[t]$.
Since $Q_{p,q}(t)$ is even and $Q_{p,q}(0)=-p^{10}q^{10}\neq 0$, Lemma~\ref{lem:sym_normal} (applied over $\mathbb{Q}$) yields
\[
B(t)=-A(-t),
\]
so $Q_{p,q}(t)$ admits the symmetric normal form
\[
Q_{p,q}(t)=R_{p,q}(t)\cdot\bigl(-R_{p,q}(-t)\bigr)
\]
with $R_{p,q}(t):=A(t)\in\mathbb{Z}[t]$.
By Corollary~\ref{cor:int_to_rat} with $r=p/q$ and $u=t/q^2$ we obtain a symmetric $5+5$ factorization of $Q_r(u)$ over $\mathbb{Q}$.
By Proposition~\ref{prop:F} there exists $a\in\mathbb{Q}$ with $F(r,a)=0$.
By Proposition~\ref{prop:f} this yields a rational solution $(s,y)\in\mathbb{Q}_{>0}\times\mathbb{Q}$ of $f(s,y)=0$ with $s=r^2$.
Since $r>0$ and $r\neq 1$ (Remark~\ref{rem:domain}), we have $s>0$ and $s\neq 1$, contradicting Theorem~\ref{thm:noreal}.
\end{proof}


\appendix
\section{SageMath/SymPy scripts and transcripts}\label{app:scripts}

\noindent
The computations in Propositions~\ref{prop:F} and \ref{prop:disc}, and the Sturm certificates in Theorem~\ref{thm:noreal},
are reproduced by the following scripts.
They were executed in SageMath \cite{SageMath} and make essential use of SymPy \cite{SymPy} for symbolic elimination and discriminant factorization.

\subsection*{Script 01: Algebraic derivation and genus computation}
\noindent\textbf{Code.}
\begin{code}
# SageMath script
# -*- coding: utf-8 -*-
"""
Script 01: Algebraic Derivation and Genus Calculation for the Second Cuboid Conjecture in the q=1 normalization, i.e. for Q_r(u).
Case: Quintic 5+5 splitting (symmetric normal form)

Author: Valery Asiryan
Date: 2026

Core idea:
1.  We investigate whether the 10th-degree polynomial Q_r(u) (normalized with q=1)
    can split into two factors of degree 5.
2.  Based on the evenness of Q_r(u), a 5+5 splitting into irreducible quintics must occur in the symmetric normal form
    Q_r(u)=R(u)\,(-R(-u)).
3.  By comparing coefficients, we derive a system of algebraic equations.
4.  Using Resultants (Variable Elimination), we reduce this system to a single
    necessary condition F(r, a) = 0, defining a plane algebraic curve.
5.  We compute the geometric genus of this curve.
"""

import sympy as sp

# Variables
r, a, d, u = sp.symbols('r a d u')

# Coefficients of Q_r(u) = u^10 + A u^8 + B u^6 + C u^4 + D u^2 - r^10
A = (2 + r**2)*(3 - 2*r**2)
B = 1 + 10*r**2 + 4*r**4 - 14*r**6 + r**8
C = -r**2*(1 - 14*r**2 + 4*r**4 + 10*r**6 + r**8)
D = -r**6*(1 + 2*r**2)*(-2 + 3*r**2)
Q = u**10 + A*u**8 + B*u**6 + C*u**4 + D*u**2 - r**10

# Ansatz for 5+5 splitting: Q_r(u) = R(u) * (-R(-u))
b, c, e = sp.symbols('b c e')
R = u**5 + a*u**4 + b*u**3 + c*u**2 + d*u + e
prod = sp.expand(R * (-R.subs(u, -u)))

# Coeff equations
P = sp.Poly(prod - Q, u)    # degree 8 (u^10 cancels)
coeffs = P.all_coeffs()     # [u^8, u^7, ..., u^0]

# Nontrivial ones are u^8,u^6,u^4,u^2,u^0
eq8 = coeffs[0]   # u^8
eq6 = coeffs[2]   # u^6
eq4 = coeffs[4]   # u^4
eq2 = coeffs[6]   # u^2
eq0 = coeffs[8]   # u^0

# Solve the easy ones:
e_val = r**5                      # from eq0: e^2=r^10 (choose e=r^5)
b_val = (a**2 + A)/2              # from eq8: 2b-a^2=A
c_val = (d**2 - D)/(2*e_val)      # from eq2: d^2-2ce=D

# Remaining two equations -> E2,E3
E2 = sp.factor(sp.together(eq6.subs({b:b_val, c:c_val, e:e_val})).as_numer_denom()[0])
E3 = sp.factor(sp.together(eq4.subs({b:b_val, c:c_val, e:e_val})).as_numer_denom()[0])

print("E2 =", E2)
print("E3 =", E3)

# Eliminate d
Res = sp.resultant(E2, E3, d)
Res = sp.factor_terms(Res, r)     # pulls out r^k

# Remove r^20 and take primitive part in Z[r,a]
F = sp.expand(Res / r**20)
F = sp.Poly(F, r, a, domain=sp.ZZ).primitive()[1]

print("deg_r(F) =", sp.degree(F, r), "deg_a(F) =", sp.degree(F, a))

aa = sp.Symbol('aa')
print("F(1,a) =", sp.factor(F.subs({r:1, a:aa})))
print("F(0,a) =", sp.factor(F.subs({r:0, a:aa})))
print("Symmetry F(-r,-a)=F(r,a):", sp.simplify(F.subs({r:-r, a:-a}) - F) == 0)

# F expression
Fexpr = F.as_expr()
print("F =", Fexpr)

aa = sp.Symbol('aa')
print("F(1,a) factor =", sp.factor(Fexpr.subs({r:1, a:aa})))
print("F(0,a) factor =", sp.factor(Fexpr.subs({r:0, a:aa})))

H24 = sum(coeff * r**i * a**j
          for (i,j), coeff in F.as_dict().items()
          if i + j == 24)

print("H24 factor =", sp.factor(H24))

# Define polynomial ring and input polynomial F
R.<r, a> = PolynomialRing(QQ, 2)

# Homogenization
R3.<r, a, w> = PolynomialRing(QQ, 3)
Fh = R3(Fexpr).homogenize(w)

# Define the projective curve and compute geometric genus
C = Curve(Fh)
g = C.geometric_genus()
print(f"Geometric genus: {g}")
\end{code}

\noindent\textbf{Transcript.}
\begin{term}
E2 = a**4*r**5 - 4*a**2*r**9 - 2*a**2*r**7 + 12*a**2*r**5 - 4*a*d**2 - 24*a*r**10 + 4*a*r**8 + 8*a*r**6 + 8*d*r**5 + 60*r**11 - 39*r**9 - 52*r**7 + 32*r**5
E3 = 4*a**2*d*r**10 - 8*a*r**15 - d**4 - 12*d**2*r**10 + 2*d**2*r**8 + 4*d**2*r**6 - 8*d*r**14 - 4*d*r**12 + 24*d*r**10 - 32*r**20 + 52*r**18 + 39*r**16 - 60*r**14
deg_r(F) = 24 deg_a(F) = 16
F(1,a) = Poly(aa**16 + 24*aa**14 + 220*aa**12 - 384*aa**11 + 680*aa**10 - 3456*aa**9 + 5190*aa**8 - 8960*aa**7 + 24488*aa**6 - 34560*aa**5 + 40412*aa**4 - 68480*aa**3 + 82200*aa**2 - 48000*aa + 10625, aa, domain='ZZ')
F(0,a) = Poly(aa**16 + 48*aa**14 + 992*aa**12 + 11520*aa**10 + 82176*aa**8 + 368640*aa**6 + 1015808*aa**4 + 1572864*aa**2 + 1048576, aa, domain='ZZ')
Symmetry F(-r,-a)=F(r,a): True
F = a**16 - 16*a**14*r**4 - 8*a**14*r**2 + 48*a**14 + 96*a**12*r**8 + 336*a**12*r**6 - 708*a**12*r**4 - 496*a**12*r**2 + 992*a**12 - 384*a**11*r**5 - 256*a**10*r**12 - 3392*a**10*r**10 + 1264*a**10*r**8 + 13832*a**10*r**6 - 10448*a**10*r**4 - 11840*a**10*r**2 + 11520*a**10 + 2304*a**9*r**9 + 1152*a**9*r**7 - 6912*a**9*r**5 + 256*a**8*r**16 + 13056*a**8*r**14 + 33696*a**8*r**12 - 97072*a**8*r**10 - 57306*a**8*r**8 + 227728*a**8*r**6 - 52192*a**8*r**4 - 145152*a**8*r**2 + 82176*a**8 - 4096*a**7*r**13 + 11264*a**7*r**11 + 13568*a**7*r**9 - 1024*a**7*r**7 - 28672*a**7*r**5 - 17408*a**6*r**18 - 223744*a**6*r**16 + 102016*a**6*r**14 + 1108816*a**6*r**12 - 918712*a**6*r**10 - 1410544*a**6*r**8 + 1855616*a**6*r**6 + 160256*a**6*r**4 - 1000448*a**6*r**2 + 368640*a**6 + 4096*a**5*r**17 - 159744*a**5*r**15 - 133632*a**5*r**13 + 558336*a**5*r**11 - 41472*a**5*r**9 - 397312*a**5*r**7 + 135168*a**5*r**5 + 411136*a**4*r**20 + 1459712*a**4*r**18 - 4870752*a**4*r**16 - 1995408*a**4*r**14 + 12098300*a**4*r**12 - 3155536*a**4*r**10 - 11363808*a**4*r**8 + 7660032*a**4*r**6 + 2688512*a**4*r**4 - 3907584*a**4*r**2 + 1015808*a**4 + 122880*a**3*r**19 + 2347008*a**3*r**17 - 4035584*a**3*r**15 - 2945920*a**3*r**13 + 7126016*a**3*r**11 - 643072*a**3*r**9 - 3284992*a**3*r**7 + 1245184*a**3*r**5 - 3916800*a**2*r**22 + 2157312*a**2*r**20 + 23243456*a**2*r**18 - 24547376*a**2*r**16 - 33596872*a**2*r**14 + 54808208*a**2*r**12 + 6052288*a**2*r**10 - 43654912*a**2*r**8 + 16301056*a**2*r**6 + 9723904*a**2*r**4 - 8060928*a**2*r**2 + 1572864*a**2 - 3010560*a*r**21 + 12288*a*r**19 + 14575872*a*r**17 - 9259392*a*r**15 - 16972544*a*r**13 + 17420288*a*r**11 + 3018752*a*r**9 - 8192000*a*r**7 + 2359296*a*r**5 + 12960000*r**24 - 30931200*r**22 - 16244128*r**20 + 96898160*r**18 - 41403359*r**16 - 100331856*r**14 + 93598176*r**12 + 26662144*r**10 - 61456128*r**8 + 14524416*r**6 + 11501568*r**4 - 6815744*r**2 + 1048576
F(1,a) factor = (aa - 1)**6*(aa**2 - 2*aa + 17)*(aa**2 + 2*aa + 5)**4
F(0,a) factor = (aa**2 + 4)**4*(aa**2 + 8)**4
H24 factor = 256*r**16*(-a + 3*r)**2*(-a + 5*r)**2*(a + 3*r)**2*(a + 5*r)**2
Geometric genus: 21
\end{term}

\subsection*{Script 02: Discriminant factorization and Sturm certificates}
\noindent\textbf{Code.}
\begin{code}
# SageMath script
# -*- coding: utf-8 -*-
"""
Script 02: Certificate-style script for closing the 5+5 splitting case for the Second Cuboid Conjecture in the q=1 normalization, i.e. for Q_r(u).
Case: Quintic 5+5 splitting (symmetric normal form)

Author: Valery Asiryan
Date: 2026

Core idea:
1.  A 5+5 splitting forces the symmetric normal form and hence the existence of (r,a) in Q^2 with F(r,a)=0.
2.  Substitute a=r*y and s=r^2 to pass to the quotient curve f(s,y)=0 with s>0.
3.  For fixed s>0, f(s,y) is a degree-16 univariate in y.
    Show: for every rational s>0 with s != 1, f(s,y) has no real roots.
    Hence no rational y, hence no rational (r,a) with r>0, r != 1.
"""

import sympy as sp

# ---------------------------------------------------------------------
# Step 1: Recompute the necessary condition F(r,a)=0 via elimination (resultant).
# ---------------------------------------------------------------------
r, a, d, u = sp.symbols('r a d u')

A = (2 + r**2)*(3 - 2*r**2)
B = 1 + 10*r**2 + 4*r**4 - 14*r**6 + r**8
C = -r**2*(1 - 14*r**2 + 4*r**4 + 10*r**6 + r**8)
D = -r**6*(1 + 2*r**2)*(-2 + 3*r**2)

Q = u**10 + A*u**8 + B*u**6 + C*u**4 + D*u**2 - r**10

b, c, e = sp.symbols('b c e')
R = u**5 + a*u**4 + b*u**3 + c*u**2 + d*u + e
prod = sp.expand(R * (-R.subs(u, -u)))

P = sp.Poly(prod - Q, u)
coeffs = P.all_coeffs()  # [u^8,u^7,...,u^0], since u^10 cancels

eq8 = coeffs[0]   # u^8
eq6 = coeffs[2]   # u^6
eq4 = coeffs[4]   # u^4
eq2 = coeffs[6]   # u^2
eq0 = coeffs[8]   # u^0

# Easy equations:
e_val = r**5                 # e^2=r^10, choose e=r^5 WLOG
b_val = (a**2 + A)/2         # 2b-a^2=A
c_val = (d**2 - D)/(2*e_val) # d^2-2ce=D

E2 = sp.factor(sp.together(eq6.subs({b:b_val, c:c_val, e:e_val})).as_numer_denom()[0])
E3 = sp.factor(sp.together(eq4.subs({b:b_val, c:c_val, e:e_val})).as_numer_denom()[0])

Res = sp.resultant(E2, E3, d)
Res = sp.factor_terms(Res, r)

F = sp.expand(Res / r**20)  # remove the extraneous r^20
F = sp.Poly(F, r, a, domain=sp.ZZ).primitive()[1].as_expr()

print("Computed F(r,a): deg_r =", sp.degree(F, r), "deg_a =", sp.degree(F, a))

# ---------------------------------------------------------------------
# Step 2: Quotient substitution a=r*y and s=r^2
# ---------------------------------------------------------------------
s, y = sp.symbols('s y')
Fry = sp.expand(F.subs({a: r*y}))

# Rewrite Fry = f(s,y) where r^(2k) -> s^k
polyFry = sp.Poly(Fry, r, y, domain=sp.ZZ)
f_sy = 0
for (er, ey), coeff in polyFry.terms():
    assert er 
    f_sy += coeff * s**(er//2) * y**ey
f_sy = sp.expand(f_sy)

print("Computed f(s,y): deg_s =", sp.degree(f_sy, s), "deg_y =", sp.degree(f_sy, y))

# ---------------------------------------------------------------------
# Step 3: Discriminant in y and factorization
# ---------------------------------------------------------------------
Fy = sp.Poly(f_sy, y, domain=sp.ZZ[s])
disc = sp.discriminant(Fy, y)
disc_poly = sp.Poly(disc, s, domain=sp.ZZ)
disc_fac = sp.factor_list(disc_poly.as_expr())

print("\nDisc_y(f) factorization (over ZZ):")
print("Constant factor =", disc_fac[0])
for fac, exp in disc_fac[1]:
    print("  (", sp.factor(fac), ")^", exp)

# Extract the two non-linear factors (deg 6 and deg 28)
P6 = None
P28 = None
for fac, exp in disc_fac[1]:
    deg = sp.Poly(fac, s, domain=sp.ZZ).degree()
    if deg == 6:
        P6 = sp.Poly(fac, s, domain=sp.ZZ)
    if deg == 28:
        P28 = sp.Poly(fac, s, domain=sp.ZZ)

assert P6 is not None and P28 is not None

print("\nP6(s) =", P6.as_expr())
print("deg P28(s) =", P28.degree())

print("\nRational roots of P6:", P6.ground_roots())
print("Rational roots of P28:", P28.ground_roots())

# ---------------------------------------------------------------------
# Step 4: Isolate positive real discriminant roots and certify 0 real roots of f(s,·)
# for representative rationals s in each interval.
# ---------------------------------------------------------------------
print("\nPositive real roots:")
print("P6:", P6.count_roots(0, sp.oo))
print("P28:", P28.count_roots(0, sp.oo))

print("\nBrackets:")
print("P6 root in (0.031,0.032):", sp.sign(P6.as_expr().subs(s, sp.Rational(31,1000))),
      sp.sign(P6.as_expr().subs(s, sp.Rational(32,1000))))
print("P6 root in (31.8,31.9):", sp.sign(P6.as_expr().subs(s, sp.Rational(159,5))),
      sp.sign(P6.as_expr().subs(s, sp.Rational(319,10))))

print("P28 root in (0.47,0.471):", sp.sign(P28.as_expr().subs(s, sp.Rational(47,100))),
      sp.sign(P28.as_expr().subs(s, sp.Rational(471,1000))))
print("P28 root in (31.44,31.45):", sp.sign(P28.as_expr().subs(s, sp.Rational(786,25))),
      sp.sign(P28.as_expr().subs(s, sp.Rational(629,20))))
print("P28 root in (31.66,31.67):", sp.sign(P28.as_expr().subs(s, sp.Rational(1583,50))),
      sp.sign(P28.as_expr().subs(s, sp.Rational(3167,100))))

samples = [
    ("(0, alpha1)", sp.Rational(1,100)),
    ("(alpha1, beta1)", sp.Rational(1,5)),
    ("(beta1, 1)", sp.Rational(4,5)),
    ("(1, beta2)", sp.Rational(2,1)),
    ("(beta2, beta3)", sp.Rational(63,2)),     # 31.5
    ("(beta3, alpha2)", sp.Rational(317,10)),  # 31.7
    ("(alpha2, +oo)", sp.Rational(40,1)),
]

print("\nReal root counts of y | f(s,y)=0 at sample s values:")
for name, sval in samples:
    f_s = sp.Poly(f_sy.subs(s, sval), y, domain=sp.QQ)
    cnt = f_s.count_roots(-sp.oo, sp.oo)
    print(f"  {name:>15}  s={sval}:  #real roots = {cnt}")

f1 = sp.factor(sp.expand(f_sy.subs(s, 1)))
print("\nf(1,y) factorization:", f1)
\end{code}

\noindent\textbf{Transcript.}
\begin{term}
Computed F(r,a): deg_r = 24 deg_a = 16
Computed f(s,y): deg_s = 12 deg_y = 16

Disc_y(f) factorization (over ZZ):
Constant factor = 8148143905337944345073782753637512644205873574663745002544561797417525199053346824733589504
  ( s + 1 )^ 22
  ( s - 1 )^ 54
  ( s )^ 156
  ( s**6 + 15*s**5 - 1585*s**4 + 3052*s**3 - 1585*s**2 + 15*s + 1 )^ 4
  ( 103680000*s**28 - 7521292800*s**27 + 169819194368*s**26 - 1294911396672*s**25 + 5782812799560*s**24 - 16312796194682*s**23 + 26530306841415*s**22 - 10896270624660*s**21 - 54726287337296*s**20 + 133633789213194*s**19 - 113160119424615*s**18 - 66661996599936*s**17 + 257666706220630*s**16 - 219804850864326*s**15 - 42039280815751*s**14 + 237530759945852*s**13 - 176268442531244*s**12 - 7327917745386*s**11 + 97772884071815*s**10 - 65474588747304*s**9 + 8804890164542*s**8 + 13121572011040*s**7 - 9871152051904*s**6 + 3433840428544*s**5 - 664203117056*s**4 + 64335036416*s**3 - 1308557312*s**2 - 162529280*s - 2097152 )^ 2

P6(s) = s**6 + 15*s**5 - 1585*s**4 + 3052*s**3 - 1585*s**2 + 15*s + 1
deg P28(s) = 28

Rational roots of P6: {}
Rational roots of P28: {}

Positive real roots:
P6: 2
P28: 3

Brackets:
P6 root in (0.031,0.032): 1 -1
P6 root in (31.8,31.9): -1 1
P28 root in (0.47,0.471): -1 1
P28 root in (31.44,31.45): 1 -1
P28 root in (31.66,31.67): -1 1

Real root counts of y | f(s,y)=0 at sample s values:
      (0, alpha1)  s=1/100:  #real roots = 0
  (alpha1, beta1)  s=1/5:  #real roots = 0
       (beta1, 1)  s=4/5:  #real roots = 0
       (1, beta2)  s=2:  #real roots = 0
   (beta2, beta3)  s=63/2:  #real roots = 0
  (beta3, alpha2)  s=317/10:  #real roots = 0
    (alpha2, +oo)  s=40:  #real roots = 0

f(1,y) factorization: (y - 1)**6*(y**2 - 2*y + 17)*(y**2 + 2*y + 5)**4
\end{term}


\begingroup
\footnotesize

\endgroup


\begin{thebibliography}{99}

\bibitem{Sharipov2011Cuboids}
R.~A.~Sharipov,
\newblock \emph{Perfect cuboids and irreducible polynomials},
\newblock Ufa Math.\ J.\ \textbf{4} (2012), no.~1, 153--160.

\bibitem{Sharipov2011Note}
R.~A.~Sharipov,
\newblock \emph{Asymptotic approach to the perfect cuboid problem},
\newblock Ufa Math.\ J.\ \textbf{7} (2015), no.~3, 95--107.

\bibitem{Faltings}
G.~Faltings,
\newblock Endlichkeitss{\"a}tze f{\"u}r abelsche Variet{\"a}ten {\"u}ber Zahlk{\"o}rpern,
\newblock \emph{Invent.\ Math.} \textbf{73} (1983), 349--366.

\bibitem{Sturm}
C.~Sturm,
\newblock M{\'e}moire sur la r{\'e}solution des {\'e}quations num{\'e}riques,
\newblock \emph{Bull.\ Sci.\ Math.} \textbf{11} (1835), 419--425.

\bibitem{SageMath}
The SageMath Developers,
\newblock \emph{SageMath, the Sage Mathematics Software System},
\newblock \texttt{https://www.sagemath.org}.

\bibitem{SymPy}
A.~Meurer and others,
\newblock \emph{SymPy: symbolic computing in Python},
\newblock \emph{PeerJ Computer Science} \textbf{3} (2017), e103.

\end{thebibliography}
\end{document}